\newtheorem{thm}{Theorem}
\newtheorem{lem}[thm]{Lemma}
\newtheorem{cor}[thm]{Corollary}
\theoremstyle{remark}
\newtheorem*{rem}{Remark}
\def\C{\mathbb C}
\def\R{\mathbb R}
\def\e{\mathrm e}
\def\d{\mathrm d}
\let\i\relax \def\i{\mathrm i}
\let\Re\relax
\DeclareMathOperator{\Re}{Re}
\DeclareMathOperator{\spec}{spec}
\DeclareMathOperator{\conv}{conv}
\DeclareMathOperator{\diag}{diag}
\DeclareMathOperator{\bdiag}{b-diag}
\begin{document}
\author{Jens Wirth}
\address{Jens Wirth, Department of Mathematics, Imperial College London, 180 Queen's Gate, London SW7 2AZ, UK}
\email{j.wirth@imperial.ac.uk}
\thanks{Author supported by EPSRC with grant EP/E062873/1.}
\title{Block-diagonalisation of matrices and operators}
\date{\today}
\subjclass[2000]{47A56; 15A22}
\keywords{Perturbation theory of matrices, diagonalisation, spectral decomposition}
\begin{abstract}
In this short note we deal with a constructive scheme to decompose a continuous family of matrices 
$A(\rho)$ asymptotically as $\rho\to0$ into blocks corresponding to groups of eigenvalues of the limit matrix $A(0)$. We also discuss the extension of the scheme to matrix families depending upon additional parameters and operators on Hilbert spaces.  
\end{abstract}
\maketitle
\section{Matrix theory}
\subsection{Preliminaries}\label{sec:1.1} We first recall some well-known facts about matrix equations of 
Sylvester type and their solution. Let $A^+, A^-\in\C^{m\times m}$ be two matrices with
\begin{equation}\label{eq:1}
   \Re \spec A^+ > 0,\qquad \Re\spec A^- <0.
\end{equation}
Then a solution to the Sylvester equation
\begin{equation}\label{eq:2}
   A^+X - X A^- = B
\end{equation}
for a given right hand side $B\in\C^{m\times m}$ can be represented by the integral
\begin{equation}\label{eq:3}
X = \int _0^\infty \e^{-tA^+} B \e^{tA^-} \d t.
\end{equation}
Indeed, by assumption \eqref{eq:1} we know that there exists a constant $c>0$ such that the matrix exponentials satisfy  $\|\e^{-tA^+}\|\,\|\e^{tA^-}\|\lesssim \e^{-ct}$ and the integral converges exponentially. Furthermore, plugging \eqref{eq:3} into \eqref{eq:2} immediately yields
\begin{equation*}
 A^+X - X A^-= \int_0^\infty \big( A^+ \e^{-tA^+} B \e^{tA^-} - \e^{-tA^+} B \e^{tA^-}A^-\big) \d t = 
 - \int_0^\infty \frac{\d}{\d t} \big(\e^{-tA^+} B \e^{tA^-}\big) \d t = B.
\end{equation*}
The special case, where $A^-=-(A^+)^*$ is known as Lyapunov equation and plays an essential role in control theory. For some details see, e.g., \cite{Datta:1999}.

We will apply this representation in a slightly modified form. Assume for this that there exist numbers $\alpha\in\C^\times=\{\alpha\in\C\,:\,\alpha\ne0\}$ and $\beta\in\R$ such that 
\begin{equation}\label{eq:1'}
   \Re (\alpha \spec A^+) > \beta,\qquad \Re(\alpha\spec A^-) <\beta.\tag{1'}
\end{equation}
Then \eqref{eq:2} can be solved by an integral of the form \eqref{eq:3} replacing the path of integration by a suitable ray in the complex plane,
\begin{equation}\label{eq:3'}
X = \int _0^{\alpha\infty} \e^{-tA^+} B \e^{tA^-} \d t,\tag{3'}
\end{equation}
(or simply by dividing equation \eqref{eq:2} by $\alpha$). Condition \eqref{eq:1'} means that the spectra of $A^+$ and $A^-$ are separated by the line $\{ \zeta\in\C\, :\,  \Re(\alpha\zeta)=\beta \}$.  

\subsection{Matrix families and spectral block-decomposition} We proceed to our main topic and consider a family of matrices $A(\rho)\in\C^{m\times m}$ depending upon a (real or complex) parameter $\rho$. We assume it has a full asymptotic expansion as $\rho\to0$,
\begin{equation}\label{eq:4}
A(\rho) \sim A_0 + \rho A_1 + \rho^2 A_2 + \cdots, \quad \rho\to0,
\end{equation}
meaning that the difference of $A(\rho)$ and the first $N$ terms on the right is of order $\mathcal O(\rho^N)$. 

We assume further that the spectrum of the matrix $A_0$ is decomposed into groups of eigenvalues,
\begin{equation}\label{eq:5}
  \spec A_0 = \dot\bigcup_{j\in\mathcal I} \mathfrak S_j, \qquad \conv \mathfrak S_i\cap \conv \mathfrak S_j = \emptyset, \quad i\ne j ,  
\end{equation}
such that any two components $\mathfrak S_i$ and $\mathfrak S_j$ have separated convex hulls. Then the following statement holds true.

\begin{thm}\label{thm1}
Assume \eqref{eq:4} and \eqref{eq:5}.  Then there exists an invertible matrix family 
$M(\rho)$ having a full asymptotic expansion as $\rho\to0$ such that the matrix
\begin{equation}
M^{-1} (\rho)A(\rho) M(\rho)
\end{equation}
is block-diagonal modulo $\bigcap \mathcal O(\rho^N)$ with blocks corresponding to the groups of eigenvalues given in \eqref{eq:5}.
\end{thm}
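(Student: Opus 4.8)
The plan is to build $M(\rho)$ as an asymptotic series and to fix its coefficients recursively so that the conjugated family becomes block-diagonal order by order. As a first, purely algebraic step I would normalise the leading term. The groups $\mathfrak S_j$ consist of eigenvalues of $A_0$ and have pairwise disjoint convex hulls, so the Riesz projections $P_j=\frac1{2\pi\i}\oint_{\Gamma_j}(\zeta-A_0)^{-1}\,\d\zeta$ (with $\Gamma_j$ a small contour around $\mathfrak S_j$) satisfy $\sum_jP_j=I$, $P_iP_j=\delta_{ij}P_j$ and commute with $A_0$; choosing a basis adapted to $\C^m=\bigoplus_j\operatorname{ran}P_j$ produces a constant invertible $M_0$ with $M_0^{-1}A_0M_0=\bdiag(A_0^{(1)},A_0^{(2)},\dots)$ and $\spec A_0^{(j)}=\mathfrak S_j$. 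Replacing $A(\rho)$ by $M_0^{-1}A(\rho)M_0$, I may henceforth assume that $A_0$ is itself block-diagonal.

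Next I would set up the ansatz $M(\rho)=I+\rho M_1+\rho^2M_2+\cdots$ and $B(\rho)=A_0+\rho B_1+\rho^2B_2+\cdots$, demanding that $B(\rho)$ be block-diagonal and that $M(\rho)B(\rho)=A(\rho)M(\rho)$ hold as formal power series in $\rho$. Comparing coefficients of $\rho^N$ gives, for every $N\ge1$,
\begin{equation*}
B_N-[A_0,M_N]=A_N+\sum_{k=1}^{N-1}\bigl(A_kM_{N-k}-M_kB_{N-k}\bigr)=:C_N,
\end{equation*}
the right-hand side involving only data fixed in the previous steps. Decomposing a matrix as $X=X^{\mathrm d}+X^{\mathrm o}$ into its block-diagonal and block-off-diagonal parts, and using that $A_0$ is block-diagonal (so the commutator respects this splitting), the displayed equation breaks into $B_N=C_N^{\mathrm d}+[A_0,M_N^{\mathrm d}]$ and $[A_0,M_N^{\mathrm o}]=-C_N^{\mathrm o}$. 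I would spend the freedom in $M_N$ by taking $M_N^{\mathrm d}=0$, so that $B_N=C_N^{\mathrm d}$ is block-diagonal as desired, and then solve the off-diagonal relation blockwise: for $i\ne j$ its $(i,j)$-block is the Sylvester equation $A_0^{(i)}(M_N)_{ij}-(M_N)_{ij}A_0^{(j)}=-(C_N)_{ij}$. Since $\conv\mathfrak S_i\cap\conv\mathfrak S_j=\emptyset$, a separating line yields $\alpha\in\C^\times$, $\beta\in\R$ with $\Re(\alpha\spec A_0^{(i)})>\beta>\Re(\alpha\spec A_0^{(j)})$, i.e.\ condition \eqref{eq:1'}, and \eqref{eq:3'} provides the unique $(M_N)_{ij}$. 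This determines $M_N$ and $B_N$, and the induction proceeds.

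Finally I would turn these formal data into genuine families: by Borel's lemma there is a smooth $\widetilde M(\rho)$ with $\widetilde M(\rho)\sim I+\rho M_1+\rho^2M_2+\cdots$; put $M(\rho)=M_0\widetilde M(\rho)$. As $\widetilde M(0)=I$, $M(\rho)$ is invertible near $\rho=0$ and has a full asymptotic expansion, hence so does $M^{-1}(\rho)A(\rho)M(\rho)$; since $A(\rho)$ differs from its asymptotic sum by a flat remainder and conjugation by $\widetilde M(\rho)$ preserves flatness, the coefficients of that expansion are exactly the block-diagonal matrices $B_k$. Thus for every $N$ the family $M^{-1}(\rho)A(\rho)M(\rho)$ coincides with a block-diagonal matrix up to $\mathcal O(\rho^N)$, which is the claim.

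The genuinely non-trivial point — indeed the only one that is not bookkeeping — is the solvability of the Sylvester equations occurring at each step. This is precisely what the disjointness of the convex hulls in \eqref{eq:5} buys: a separating line, and hence the hypothesis \eqref{eq:1'} needed to invoke the integral representation \eqref{eq:3'} recalled in Section \ref{sec:1.1}. Everything else is the recursion above together with the routine passage from a formal power series to a smooth asymptotic sum.
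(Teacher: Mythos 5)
Your argument is the paper's argument in slightly different clothing: you equate coefficients in the formal identity $M(\rho)B(\rho)=A(\rho)M(\rho)$, whereas the paper tracks the leading coefficient $\tilde B_\ell$ of the running error $B_\ell(\rho)$ and solves $[A_0,M_\ell]+\tilde B_\ell-\Lambda_\ell=0$; these produce the identical recursion (your $C_N$ is the paper's $\tilde B_\ell$, your choice $M_N^{\mathrm d}=0$, $B_N=C_N^{\mathrm d}$ is exactly $\Lambda_\ell=\bdiag\tilde B_\ell$, $M_\ell^{(j,j)}=0$), the same blockwise Sylvester solution via the separating line and \eqref{eq:3'}, and the same final appeal to Borel summation and openness of the invertible matrices. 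The proposal is correct and follows essentially the same route.
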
 

We will prove the following equivalent statement in a purely constructive way. The prove follows the standard scheme from \cite[Section 2.1]{Jachmann:2008a}.

\begin{thm}\label{thm2}
Assume \eqref{eq:4} and \eqref{eq:5}.  Then there exists for any number $N$ 
\begin{enumerate}
\item matrices $M_0, \ldots, M_{N-1} \in \C^{m\times m}$, $M_0$ invertible, and
\item matrices $\Lambda_0,\ldots, \Lambda_{N-1}$, block-diagonal with blocks corresponding to the partition of eigenvalues given in \eqref{eq:5},
\end{enumerate}
such that   
\begin{equation}
A(\rho) \left(\sum_{k=0}^{N-1} \rho^k M_k\right) - \left(
\sum_{k=0}^{N-1} \rho^k M_k \right)\left(\sum_{k=0}^{N-1}\rho^k \Lambda_k \right) = \mathcal O(\rho^N).
\end{equation}
\end{thm}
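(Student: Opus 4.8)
The plan is to proceed by induction on $N$, constructing the coefficients $M_k$ and $\Lambda_k$ one at a time so that the coefficient of $\rho^n$ in the expansion of $A(\rho)M(\rho) - M(\rho)\Lambda(\rho)$ vanishes for each $n \le N-1$. For the base case $n=0$, the equation reads $A_0 M_0 - M_0 \Lambda_0 = 0$, so I would take $M_0$ to be the matrix whose columns are a basis adapted to the spectral decomposition \eqref{eq:5} — i.e. $M_0$ is built from the Riesz spectral projectors of $A_0$ onto the invariant subspaces associated with each group $\mathfrak S_j$ — and $\Lambda_0 = M_0^{-1} A_0 M_0$, which is block-diagonal by construction. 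This reduces everything to the case where $A_0 = \Lambda_0$ is already block-diagonal with the $j$-th block having spectrum $\mathfrak S_j$.

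Having fixed $M_0,\dots,M_{n-1}$ and $\Lambda_0,\dots,\Lambda_{n-1}$, the coefficient of $\rho^n$ in $A(\rho)M(\rho)-M(\rho)\Lambda(\rho)$ takes the form $\Lambda_0 M_n - M_n \Lambda_0 + M_0 \Lambda_n + R_n$, where $R_n$ is a known matrix depending only on the previously determined data (the $A_j$ for $j\le n$ and the $M_k,\Lambda_k$ for $k<n$); after multiplying by $M_0^{-1}$ on the left this becomes $[\Lambda_0, \widetilde M_n] + \Lambda_n + \widetilde R_n = 0$ with $\widetilde M_n = M_0^{-1} M_n$, $\widetilde R_n = M_0^{-1} R_n$. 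The point is to split $\widetilde R_n$ into its block-diagonal part and its block-off-diagonal part. For the block-diagonal part I set $\Lambda_n$ equal to its negative (this is where $\Lambda_n$ gets its value, and it is block-diagonal as required). For the block-off-diagonal part I must solve $[\Lambda_0, \widetilde M_n] = -(\text{off-diagonal part of }\widetilde R_n)$ for $\widetilde M_n$, and I am free to choose $\widetilde M_n$ block-off-diagonal. Block by block, the $(i,j)$ entry ($i\ne j$) of this equation is a Sylvester equation $\Lambda_0^{(i)} X - X \Lambda_0^{(j)} = B_{ij}$ where $\Lambda_0^{(i)}$ has spectrum $\mathfrak S_i$ and $\Lambda_0^{(j)}$ has spectrum $\mathfrak S_j$; since $\conv\mathfrak S_i \cap \conv\mathfrak S_j = \emptyset$, the two spectra can be separated by a line, so condition \eqref{eq:1'} holds for a suitable $\alpha,\beta$ and the Sylvester equation is solvable by the integral \eqref{eq:3'}. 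This closes the induction.

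The main obstacle — and the reason the convex-hull separation hypothesis in \eqref{eq:5} is exactly the right one — is precisely the solvability of the Sylvester equation at each step: one needs $\spec\Lambda_0^{(i)} \cap \spec\Lambda_0^{(j)} = \emptyset$ for mere solvability, but to have the clean integral representation and the quantitative control it provides (and to handle the later extension to operators and to additional parameters, where one wants uniform bounds) it is cleanest to invoke the separating-line condition \eqref{eq:1'}, which the disjoint convex hulls guarantee. Everything else is bookkeeping: one should check that $R_n$ genuinely depends only on lower-order data — this is immediate from expanding the two products and isolating the $\rho^n$ coefficient — and that invertibility of $M_0$ (hence of $M(\rho)$ for small $\rho$, as a perturbation) is preserved, which is automatic since $M(\rho) = M_0 + \mathcal O(\rho)$. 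Finally, to recover Theorem \ref{thm1} from Theorem \ref{thm2} one notes that the block-diagonal matrix $\Lambda(\rho) = \sum \rho^k\Lambda_k$ plays the role of $M^{-1}(\rho)A(\rho)M(\rho)$ modulo $\bigcap\mathcal O(\rho^N)$, and a Borel-type summation argument produces an actual smooth (or merely asymptotic) family $M(\rho)$ from the formal series.
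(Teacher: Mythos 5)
Your proposal is correct and is essentially the same argument as the paper's: a term-by-term induction where each step splits the known data into its block-diagonal part (defining $\Lambda_n$) and its block-off-diagonal part (which is absorbed into a commutator equation with $A_0$, solved blockwise via the Sylvester integral \eqref{eq:3'} thanks to the convex-hull separation). The only cosmetic difference is that you build $M_0$ explicitly from spectral projectors and conjugate, while the paper simply declares ``WLOG $A_0$ is block-diagonal'' and takes $M_0=\mathrm I$; and you have a couple of immaterial sign slips (the coefficient of $\rho^n$ is $A_0 M_n - M_n\Lambda_0 - M_0\Lambda_n + R_n$, giving $[\Lambda_0,\widetilde M_n] - \Lambda_n + \widetilde R_n = 0$ after conjugation), which do not affect the construction.
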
 
\begin{proof} Without loss of generality we may assume that $A_0$ is already of block-diagonal form, $A_0=\bdiag(S_1,\ldots ,S_d)$ with $d=|\mathcal I|$ and $\spec S_j = \mathfrak S_j$. Then the corresponding statement with $N=1$ is valid with matrices $M_0=\mathrm I$ and $\Lambda_0=A_0$.

Now assume that the statement is already proven for a certain number $N=\ell$. We are going to construct the matrices $M_{\ell}$ and $\Lambda_{\ell}$ in such a way that the statement with $N=\ell+1$ follows. Since we assumed that $A(\rho)$ has a full asymptotic expansion the matrix family 
\begin{equation}
  B_{\ell} (\rho)  = A(\rho) \left(\sum_{k=0}^{\ell-1} \rho^k M_k\right) - \left(
\sum_{k=0}^{\ell-1} \rho^k M_k \right)\left(\sum_{k=0}^{\ell-1}\rho^k \Lambda_k \right) = \mathcal O(\rho^{\ell})
\end{equation} 
has a full asymptotic expansion. We denote its leading coefficient as $\tilde B_{\ell} = \lim_{\rho\to0} \rho^{-\ell}  B_\ell(\rho)$. Then we define $\Lambda_{\ell} = \bdiag \tilde B_{\ell}$, where $\bdiag$ selects the block-diagonal according to the partition~\eqref{eq:5}, and define $M_\ell$ to be a solution to the commutator equation
\begin{equation}\label{eq:9}
   [A_0,M_\ell]  + \tilde B_\ell - \Lambda_\ell = 0.
\end{equation}
Before we are going to solve \eqref{eq:9}, we conclude the induction argument. If we consider
the next approximation $B_{\ell+1}(\rho)$, we obtain
\begin{equation}
  B_{\ell+1}(\rho) = B_{\ell}(\rho) + \rho^\ell \big( [A_0,M_\ell] -\Lambda_\ell \big) + \mathcal O(\rho^{\ell+1}) = \mathcal O(\rho^{\ell+1})
\end{equation}
by definition of $\tilde B_\ell$ and \eqref{eq:9}. In order to solve \eqref{eq:9}, we write $M_\ell$ and $\tilde B_\ell$ as block matrices with respect to the partition \eqref{eq:5},
\begin{equation}
  M_\ell = \begin{pmatrix} M_\ell^{(1,1)} & \cdots & M_\ell^{(1,d)}\\\vdots &&\vdots \\ M_\ell^{(d,1)} & \cdots & M^{(d,d)}_\ell \end{pmatrix},\qquad
  \tilde B_\ell = \begin{pmatrix} \tilde B_\ell^{(1,1)} & \cdots & \tilde B_\ell^{(1,d)}\\\vdots &&\vdots \\ \tilde  B_\ell^{(d,1)} & \cdots & \tilde B^{(d,d)}_\ell \end{pmatrix},
\end{equation}
such that \eqref{eq:9} reads as system of equations $[S_j, M_\ell^{(j,j)}]=0$ and 
\begin{equation}\label{eq:12}
  S_i M_\ell^{(i,j)} - M_\ell^{(i,j)} S_j = - \tilde B_\ell^{(i,j)}.  
\end{equation}
Equation \eqref{eq:12} can be solved by the method already described in Section~\ref{sec:1.1}, while for the diagonal entries we may simply define $M_\ell^{(j,j)}=0$. 
\end{proof}

\begin{proof}[On the proof of Theorem~\ref{thm1}]
To conclude the proof of the first theorem we have to justify two things: First, to any sequence of matrices $M_k$ (or $\Lambda_k$ resp.) there exists a (smooth) family of matrices 
$M(\rho)$ (or $\Lambda(\rho)$ resp.) with the prescribed asymptotic expansion. This is a well-known fact from asymptotic analysis and a direct consequence of Borel's theorem. Second, because the set of invertible matrices is open within $\C^{m\times m}$, the invertibility of $M(0)=M_0$ implies invertibility of $M(\rho)$ for sufficiently small $\rho$. 
\end{proof}

\subsection{Perturbation theory and further additional parameters}\label{sec:1.3} Now we assume that the matrix families depend {\em continuously} on a further parameter $A(\rho,v)$, $v\in\Upsilon$, $\Upsilon$ being a compact metric space, such that the asymptotic expansions \eqref{eq:4} are {\em uniform} with respect to the parameters (and have coefficients $A_j(v)$, $v\in\Upsilon$, which are uniformly bounded with respect to the parameter $v$) and the separation condition \eqref{eq:5} is uniform in the parameter, i.e., there exists a constant $\delta$ such that between any two (parameter-dependent) groups of eigenvalues $\mathfrak S_i(v)$ and $\mathfrak S_j(v)$ a (parameter-dependent) closed separating strip\footnote{I.e. we assume that $\mathfrak S_i(v)$ and $\mathfrak S_j(v)$ are both disjoint to a strip $\{\zeta\in\C \;:\; |\Re (\alpha(v) \zeta - \beta(v) ) | \le \delta/2\}$ with suitable complex numbers $\alpha(v),\beta(v)\in\C$, $|\alpha(v)|=1$, depending continuously on the parameter $v\in\Upsilon$.} of width $\delta$  can be placed.

\begin{cor}\label{cor3}
Under these assumptions Theorems~\ref{thm1} and~\ref{thm2} hold uniform in the involved parameters $v\in\Upsilon$, especially all occuring matrices are uniformly bounded with respect to $v$. 
\end{cor}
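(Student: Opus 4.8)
The plan is to re-run the inductive construction from the proof of Theorem~\ref{thm2}, tracking uniformity of all quantities with respect to $v\in\Upsilon$ at each step. The key observation is that every operation in that proof is either (a) a bounded algebraic manipulation of previously constructed objects, or (b) the solution of a Sylvester equation of the form~\eqref{eq:12}, and for both we need to exhibit uniform bounds.

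First I would establish the uniform version of the Sylvester solution from Section~\ref{sec:1.1}. By hypothesis there are continuous functions $\alpha(v),\beta(v)$, $|\alpha(v)|=1$, such that the groups $\mathfrak S_i(v)$ and $\mathfrak S_j(v)$ lie strictly on opposite sides of the strip of width $\delta$; relative to the rotated, shifted spectral parameter this gives $\Re(\alpha(v)\spec S_i(v))>\beta(v)+\delta/2$ and $\Re(\alpha(v)\spec S_j(v))<\beta(v)-\delta/2$ (or vice versa). Since the coefficient matrices, hence the $S_j(v)$, are uniformly bounded, the matrix exponentials $\e^{-tS_i(v)}$ and $\e^{tS_j(v)}$ obey a bound $\|\e^{-tS_i(v)}\|\,\|\e^{tS_j(v)}\|\lesssim \e^{-\delta t}$ with a constant that can be chosen independent of $v$ (here one uses that on the compact set $\Upsilon$ the eigenvalue groups stay in a fixed bounded region, so the polynomial prefactors in $t$ coming from possible Jordan blocks are controlled uniformly — this is where a little care is needed, but a crude uniform bound $\|\e^{-tS_i(v)}\|\lesssim \langle t\rangle^{m}\e^{-(\beta(v)+\delta/2)\Re(\alpha(v)^{-1})\cdots}$ absorbed into $\e^{-\delta t/2}$ suffices). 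Consequently the representation~\eqref{eq:3'} (with ray direction $\alpha(v)$) converges uniformly and yields a solution $M_\ell^{(i,j)}(v)$ with $\|M_\ell^{(i,j)}(v)\|\lesssim \|\tilde B_\ell^{(i,j)}(v)\|$, the implied constant depending only on $\delta$ and the uniform bound on the $S_j$. Continuity of $M_\ell^{(i,j)}(v)$ in $v$ follows from continuity of the integrand together with the dominated convergence / uniform-convergence argument.

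With this in hand I would carry out the induction exactly as before: $M_0(v)=\mathrm I$, $\Lambda_0(v)=A_0(v)$ are uniformly bounded and continuous; assuming $M_0(v),\ldots,M_{\ell-1}(v)$, $\Lambda_0(v),\ldots,\Lambda_{\ell-1}(v)$ are uniformly bounded and continuous, the remainder $B_\ell(\rho,v)$ has a uniform asymptotic expansion (being built by sums and products from $A(\rho,v)$ and the already-constructed families), so its leading coefficient $\tilde B_\ell(v)=\lim_{\rho\to0}\rho^{-\ell}B_\ell(\rho,v)$ is uniformly bounded and continuous in $v$; then $\Lambda_\ell(v)=\bdiag\tilde B_\ell(v)$ inherits these properties, and $M_\ell(v)$ is assembled from the Sylvester solutions above (with zero diagonal blocks), hence is again uniformly bounded and continuous. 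This closes the induction and gives Theorem~\ref{thm2} with all constants uniform in $v$. For the uniform version of Theorem~\ref{thm1} one invokes a Borel-type summation with parameters — a standard refinement of Borel's theorem producing $M(\rho,v)$ continuous in $v$ with the prescribed uniform expansion — and notes that invertibility of $M_0(v)$ on the compact set $\Upsilon$ forces a uniform lower bound on $|\det M_0(v)|$, hence invertibility of $M(\rho,v)$ for $\rho$ below a threshold independent of $v$.

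The main obstacle I anticipate is the uniform exponential decay estimate for the matrix exponentials: away from the self-adjoint case the constant in $\|\e^{-tS_i(v)}\|\le C\e^{-c t}$ a priori depends on the Jordan structure of $S_i(v)$, which may jump as $v$ varies. The remedy is that we do not need the sharp rate — it is enough to give up an arbitrarily small fraction of the spectral gap $\delta$ to soak up the (uniformly bounded, since $\|S_i(v)\|$ is bounded) polynomial-in-$t$ prefactor, yielding a decay $\e^{-\delta t/2}$ with a $v$-independent constant; everything else in the argument is then a routine propagation of "uniformly bounded and continuous in $v$" through finitely many algebraic operations and one convergent integral.
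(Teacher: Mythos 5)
Your overall strategy matches the paper's: the only thing to control is the Sylvester solution $M_\ell^{(i,j)}(v)$ obtained from the integral representation \eqref{eq:3'}, and once a $v$-uniform bound for it is in hand the rest of the induction (and the Borel/compactness step for Theorem~\ref{thm1}) is routine propagation of uniformity. You correctly isolate the crux — a $v$-uniform exponential decay estimate for the product $\|\e^{-t\alpha(v)S_i(v)}\|\,\|\e^{t\alpha(v)S_j(v)}\|$ — and you correctly anticipate the obstacle (the constant in $\|\e^{tS}\|\le C\e^{-ct}$ a priori depends on the Jordan structure of $S$, which can jump with $v$). However, your proposed remedy is exactly the point where a genuine proof is missing. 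You assert that since $\|S_i(v)\|$ is uniformly bounded and the eigenvalue groups stay in a fixed region, the ``polynomial-in-$t$ prefactor'' is uniformly controlled and can be absorbed into $\e^{-\delta t/2}$. This is the true statement one needs, but it does not follow from the eigenvalue locations alone: the constant in front of the polynomial is governed by the resolvent/pseudospectral behaviour of $S_i(v)$, which degenerates precisely as $v$ approaches a value where a Jordan block forms. Proving that $\|\e^{tS}\|\le C(m,R,\delta)\e^{-\delta t/2}$ with $C$ depending only on the dimension, the norm bound and the gap (and not on the Jordan structure) is a nontrivial lemma — it can be done, e.g.\ via Schur triangularisation followed by a diagonal rescaling and a Gronwall argument — but your proposal neither states nor proves it, and merely flags it as ``where a little care is needed''.

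The paper sidesteps this quantitative issue entirely with a softer argument that you do not have. It splits the integral $C(v)=\int_0^\infty\|\e^{-t\alpha(v)S_i(v)}\|\,\|\e^{t\alpha(v)S_j(v)}\|\,\d t$ at a threshold $T$: on $[0,T]$ it uses the crude bound $\e^{t(\|S_i\|+\|S_j\|)}$, and on $[T,\infty)$ it invokes the spectral radius formula $\|\e^{tA}\|^{1/t}\to\mathrm r_{\spec}(\e^A)$ to get $\|\e^{\pm t(\alpha(v)S_{i/j}(v)-\beta(v))}\|^{1/t}\le 2\e^{-\delta/2}$ for $t$ large. The key observation, which is the actual content of the corollary's proof, is then that for \emph{fixed} $t$ the left-hand side is continuous in $v$, that submultiplicativity propagates the estimate from $t$ to $2t,4t,\dots$, and hence that the threshold $T$ can be chosen locally uniformly in $v$; compactness of $\Upsilon$ then makes $T$ and $C(v)$ globally uniform. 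This continuity-plus-compactness argument replaces the quantitative uniform-hyperbolicity lemma your sketch would require, and it is the step you are missing.
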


\begin{proof}
It suffices to provide a-priori bounds on the matrices $M_\ell$ constructed within the proof of Theorem~\ref{thm2}. If we assume the above given uniform separation of the components $\mathfrak S_j(v)$ of the spectrum of $A_0$, \eqref{eq:3'} applied to \eqref{eq:12} yields 
\begin{equation}\label{eq:13}
  \| M_\ell^{(i,j)}(v) \| \le \int_0^\infty \| \e^{-t \alpha(v) S_i(v)} \tilde B_\ell^{(ij)}(v) \e^{t \alpha(v) S_j(v)} \| \d t \le 
 C(v) \|\tilde B_\ell^{(i,j)}(v)\|
\end{equation}
with constants $C(v)$ estimated via
\begin{equation}\label{eq:14}
 C(v) =  \int_0^\infty \| \e^{-t \alpha(v) S_i(v)}\|\,\|\e^{t \alpha(v) S_j(v)} \| \d t 
 \le \int_0^T \e^{t\|S_i(v)\|+t\|S_j(v)\|}\d t +2 \int_T^\infty \e^{-\delta t}\d t
\end{equation}
for $T$ choosen big enough. The choice of $T$ is based on the spectral radius formula, which implies that
\begin{equation}
\|\e^{ \pm t (\alpha(v)S_{i/j}(v)-\beta(v))}\|^{1/t} \le 2\, \mathrm r_{\spec}(\e^{\pm(\alpha(v)S_{i/j}(v)-\beta(v))}) =2 \e^{-\delta/2}
\end{equation} 
for all $t$ big enough. Because the left hand side is  continuous in $v$ and the estimate for fixed $t$ implies the same estimate for $2t$ by submultiplicativity of the matrix norm\footnote{$\|e^{tA}\|^{1/t} \le c$ implies $\|e^{2tA}\|^{1/t}= \|e^{tA}\e^{tA}\|^{1/t} \le \|e^{tA}\|^{2/t}\le c^2$ and thus $\|e^{2tA}\|^{1/(2t)}\le c.$} , the constant $T$ may be chosen locally uniform in $v$. By compactness of $\Upsilon$ the constants $C(v)$ are uniformly bounded. Following the inductive scheme, we obtain bounds for all matrices uniform in $v\in\Upsilon$. 
\end{proof}

\begin{rem}
This is of particular interest, if $A_0(v)$ has multiple eigenvalues for some $v\in\Upsilon$, because then Theorem~\ref{thm1} gives a constructive approach to separate the group of these degenerate eigenvalues from the remaining spectrum in a uniform way. 
\end{rem}

\begin{rem}
In particular cases the estimate \eqref{eq:13} can be improved. If we assume that the matrices $S_i(v)$ and $S_j(v)$ are both normal, the spectral radius of the semigroups coincides with their norm and hence the integral can be estimated directly to conclude $C(v)=\delta^{-1}$. If both matrices are uniformly diagonalisable, the estimate has the form $C(v)=C\delta^{-1}$, where $C$ is the product of the condition numbers of diagonalisers to $S_i(v)$ and $S_j(v)$. 
\end{rem}

\subsection{Multi-step schemes} In \cite[Section 2.2]{Jachmann:2008a}, \cite{Jachmann:2008} multi-step diagonalisation schemes have been considered. They are based on a similar block-diagonalisation, but of minimal block size (i.e., blocks correspond to single eigenvalues of $A_0$) and---based on the assumption that corresponding components of $A_0$ are diagonable---refined by applying the block-diagonalisation scheme to the terms of lower order inside these blocks. The scheme can be iterated and if a related hierarchy of conditions is satisfied, allows for perfect diagonalisation. 

We will not go into the particulars of this algorithm, but draw a consequence from its basic idea in combination with Theorem~\ref{thm1}. We do not need {\em any} particular assumptions besides the existence of the full asymptotic expansion \eqref{eq:4} in order to conclude the following normal form of the matrix family $A(\rho)$.

\begin{thm}\label{thm4}
Assume \eqref{eq:4}. Then there exists an invertible matrix family $M(\rho)$ having a full asymptotic expansion as $\rho\to0$ such that the matrix 
\begin{equation}
M^{-1} (\rho)A(\rho) M(\rho)
\end{equation}
is block-diagonal and each of its blocks is either of dimension 1 or of the form
\begin{equation}
p_k(\rho) \mathrm I + \rho^k J + \mathcal O(\rho^{k+1}), 
\end{equation}
where $p_k$ is a polynomial of degree at most $k$ and $J$ is a Jordan type matrix, i.e., where the only non-zero entries are 1 and located on the first upper side diagonal. 
\end{thm}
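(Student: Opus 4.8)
The plan is to iterate Theorem~\ref{thm1} in a Jordan-type fashion. First I would apply Theorem~\ref{thm1} with the finest possible partition: each $\mathfrak S_j$ a single eigenvalue of $A_0$. This is legitimate since distinct points trivially have disjoint convex hulls, so we obtain $M(\rho)$ with a full asymptotic expansion such that $M^{-1}(\rho)A(\rho)M(\rho)$ is block-diagonal, each block $C_k(\rho)$ satisfying $C_k(0)=\lambda_k\mathrm I$ for a single eigenvalue $\lambda_k$ of $A_0$. Replacing $A(\rho)$ by $C_k(\rho)$ and subtracting the scalar $\lambda_k$, we are reduced to the case where $A_0$ is nilpotent; and after rescaling we may localise attention to a single such block.

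The recursive idea (from the multi-step schemes of \cite{Jachmann:2008a}) is then as follows. Suppose we have reached a block of the form $p(\rho)\mathrm I + \rho^k N(\rho)$ where $p$ is a polynomial of degree $< k$ and $N(\rho)$ has a full asymptotic expansion with $N(0)=N_0$. If $N_0=0$, then in fact the block has the form $p(\rho)\mathrm I + \rho^{k+1}N'(\rho)$ for a new $N'$ with a full expansion, and we may increase $k$; after finitely many such steps either the block becomes exactly scalar (absorb everything into $p$, but since the dimension is $\geq 1$, if it collapses to dimension $1$ we are in the first alternative of the theorem) or we reach a stage with $N_0\neq 0$. Once $N_0\neq 0$, bring $N_0$ to Jordan normal form by a constant similarity; then apply Theorem~\ref{thm1} to $N(\rho)$ with the partition of $\spec N_0$ into $\{0\}$ and (each of) the nonzero eigenvalues of $N_0$. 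This splits the block further: the part over a nonzero eigenvalue $\mu$ of $N_0$ has leading term $(p(\rho)+\rho^k\mu)\mathrm I$, i.e.\ is governed by a polynomial of degree exactly $k$, and we recurse on it with $k$ unchanged but a genuinely smaller matrix; the part over $0$ has the form $p(\rho)\mathrm I + \rho^k N^{(0)}(\rho)$ where $N^{(0)}(0)$ is the nilpotent Jordan block of $N_0$ — and this is already of the desired shape $p_k(\rho)\mathrm I + \rho^k J + \mathcal O(\rho^{k+1})$ with $J$ a Jordan-type matrix. In every branch either the matrix dimension strictly decreases or $k$ strictly increases while the dimension is bounded; combined with the fact that $p$ has degree bounded by the current $k$ throughout, the recursion terminates, and composing all the (asymptotically expandable, invertible-at-$0$) similarity transformations produces the required $M(\rho)$; invertibility for small $\rho$ follows from openness of $GL_m(\C)$ as in the proof of Theorem~\ref{thm1}.

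The main obstacle, and the point needing the most care, is the bookkeeping that makes the recursion provably terminate: one must set up the right induction parameter — I would use the pair (dimension of current block, and then $k$ bounded against something, or rather induct on dimension with an inner argument that the "$N_0=0$ forever" case cannot occur in positive dimension) — and verify that the diagonal term stays a polynomial of degree $\le k$ at each stage, i.e.\ that no spurious higher-degree or non-polynomial dependence creeps into the scalar part. A secondary technical point is checking that after each application of Theorem~\ref{thm1} the resulting sub-blocks again have full asymptotic expansions in $\rho$ (immediate, since $M^{-1}(\rho)$ does) and that rescaling $\rho^k N(\rho)\mapsto N(\rho)$ does not destroy the expansion structure — this is routine but should be stated. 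Everything else is an assembly of Theorem~\ref{thm1}, constant Jordan reductions, and Borel's theorem exactly as in the proof of Theorem~\ref{thm1}.
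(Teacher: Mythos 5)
Your strategy matches the paper's: apply Theorem~\ref{thm1} with the finest (singleton) partition, Jordanize whenever the leading coefficient ceases to be scalar, and recurse on scalar-leading blocks, with termination by decreasing dimension or by all coefficients degenerating to multiples of the identity (the $f(\rho)\mathrm I$ case). The paper organises this cleanly into three cases for the leading block $S_j$ (dimension one; not diagonalisable; diagonalisable hence scalar) and uses the finest partition of the next-order coefficient $\Lambda_1^{(j)}$ to split.

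There is however one concrete slip in your set-up. After the first application of Theorem~\ref{thm1} you claim $C_k(0)=\lambda_k\mathrm I$; in fact $C_k(0)=S_k$ is the restriction of $A_0$ to the generalised $\lambda_k$-eigenspace, i.e.\ $\lambda_k\mathrm I$ plus a possibly nonzero nilpotent. When that nilpotent part is nonzero you cannot ``subtract $\lambda_k$ and rescale'' as you propose, because the nilpotent remainder sits at order $\rho^0$. The paper's second case treats this directly: a constant Jordan reduction already gives the target normal form with $k=0$, $p_0\equiv\lambda_k$ and $J$ the Jordan-type nilpotent of $S_k$; the recursion only continues when $S_k$ is scalar. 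Your loop invariant ``$p$ of degree $<k$'' should therefore start at $k=0$ (where it is vacuous) and treat any nonzero nilpotent part of the current leading coefficient as a terminal $J$ --- this also fixes the second soft spot you mention, namely that the ``part over $0$'' of $N(\rho)$ is only already in final form when the nilpotent Jordan part of $N_0$ on the $0$-eigenspace is nonzero; when it is zero you are simply back in the ``increase $k$'' branch. With those adjustments the bookkeeping closes exactly as in the paper.
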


\begin{rem}
We do not claim that $J$ is a Jordan block, if different Jordan blocks to the same eigenvalue appear at a certain stage the scheme can not disentangle them any further.  
\end{rem}

\begin{proof} 
We apply Theorem~\ref{thm1} for the trivial partition of $\spec A_0$ into its elements. This defines a first family of diagonalisers $M_1(\rho)$ block-diagonalising $A(\rho)$ up to infinite order. Consider now one of the resulting blocks, which is of the form
\begin{equation}
  S_j + \rho \Lambda_1^{(j)} + \rho^2 \Lambda_2^{(j)} + \cdots, \rho\to0,
\end{equation}
with leading term $S_j$ satisfying $|\spec S_j| = 1$. We distinguish three cases:

\noindent{\sl First case.} The matrix $S_j$ is of dimension one.

\noindent{\sl Second case.}   If $S_j$ is not diagonalisable. Then we can choose an invertible matrix $M_2^{(j)}$ in such a way that it transforms $S_j$ into its Jordan normal form $\lambda_j I + J_j$. 

\noindent{\sl Third case.} If $S_j$ is diagonalisable, it immediately follows that $S_j=\lambda_j \mathrm I$ and we can apply Theorem~\ref{thm1} to $\rho ( \Lambda_1^{(j)}+\rho \Lambda_2^{(j)}+\cdots)$, which gives a second diagonaliser $M_2^{(j)}(\rho)$ transforming the lower order terms.

Collecting the matrices into $M_2(\rho)=\diag(M_2^{(1)}(\rho),\ldots, M_2^{(d)}(\rho))$, we obtain an invertible family resolving blocks one step further. In order to conclude the proof of Theorem~\ref{thm4}
we iterate the procedure in the third case, which will either reduce the size of blocks and thus terminate after {\em finitely} many steps or lead at some point to a matrix $f(\rho)\mathrm I$. There is no other possibility, because if in the third case a certain number of eigenvalues coincides up to infinite order the corresponding block  $S_j + \rho \Lambda_1^{(j)} + \rho^2 \Lambda_2^{(j)} + \cdots$ can only have multiples of the identity as coefficients (since the first non-identic one could not be diagonalisable otherwise).
\end{proof}

Even though the blocks have a relatively simple form, it is hard to draw any strong consequences on their spectrum from this representation. The spectrum of a small perturbation of large Jordan blocks and more generally Jordan type matrices can be quite far away from the spectrum of the Jordan matrix itself as \cite{Davies:2006} pointed out.

\section{Operator formulation}\label{sec:2}
We will finally give some remarks on similar approaches for operators on Banach or Hilbert spaces instead of finite-dimensional matrices. If all operators involved are bounded and we consider {\em finitely} many spectral components, the statements of Theorem~\ref{thm1} and~\ref{thm2} transfer immediately with the same proofs.  To decompose into infinitely many components appears to be problematic in general because then the constructed block operator matrix $M_1$ may become unbounded due to the non-uniformity of the decomposition \eqref{eq:5}. 

To obtain a non-trivial situation we consider a pencil of unbounded operators on a Hilbert space $\mathcal H$,
\begin{equation}
   A(\rho) = A_0 + \rho A_1,
\end{equation}
$A_0 : \mathcal H \supset \mathcal D(A_0) \to \mathcal H$ closed and self-adjoint such that 
$\mathcal D(A_0)$ becomes a Hilbert space with respect to the graph inner product, 
and $A_1:\mathcal D(A_0) \to \mathcal H$ bounded. Then the pencil $A(\rho)$ is closed for all $\rho$
with domain $\mathcal D(A_0)$.

For the following we assume that $A_0$ is invertible and that its spectrum has a positive and a negative component. Thus we can write $A_0$ as block-diagonal matrix $A_0=\diag(A^+,A^-)$ with a positive operator $A^+$ acting on the positive spectral subspace $\mathcal H^+$ and a negative operator $A^-$ acting on the negative spectral subspace $\mathcal H^-$. Furthermore, we write $A_1$ as block matrix 
\begin{equation}
A_1 = \begin{pmatrix} A_1^{++} & A_1^{+-}\\ A_1^{-+} & A_1^{--} \end{pmatrix}
\end{equation}
with components mapping $A_1^{+-} : \mathcal D(A^-) \to \mathcal H^+$ etc. In the special situation that 
$A_0$ is the Dirac operator (after applying the Fouldy-Wouthousen transformation to make it diagonal) and $A_1$ a self-adjoint perturbation of a particular structure the Douglas-Kroll-Hess scheme block-diagonalises the pencil $A(\rho)$, see e.g. \cite{Jansen:1989}, \cite{Siedentop:2006} or \cite{Langer:2001} for a discussion of a related approach.

Here we want to investigate how the proof of Theorem~\ref{thm2} can be generalised to the above situation and used to construct a suitable block-diagonaliser. We will distinguish two different scenarios. 

\subsection{Arbitrary perturbations and bounded diagonaliser} We will not make restrictions on $A_1$ here, thus we treat non-selfadjoint perturbations. In order to apply the scheme, we need to solve a Sylvester type problem
\begin{equation}\label{eq:12'}
   A^+X-XA^- = B
\end{equation}
for an arbitrary operator $B: \mathcal H^-\supset \mathcal D(A^-)\to\mathcal H^+$. Again we want to represent $X$ as integral,
\begin{equation}\label{eq:13'}
   X = \int_0^\infty \e^{-tA^+}B\e^{tA^-} \d t
\end{equation}
but now we have to explain the meaning of it. The semigroups $\e^{tA^-}$ and $\e^{-tA^+}$ are contractions (with norm estimated by $\e^{-\delta t}$ for a certain constant $\delta$ measuring the distance of $0$ from the spectrum of $A^+$ and $A^-$). The interpretation of the integral \eqref{eq:13'} depends on mapping properties of the operator $B$.

It seems natural to introduce the sequence of Hilbert spaces $\mathcal H^{(\pm)}_{\gamma}  = \mathcal D( |A^{(\pm)}|^\gamma)$ for $\gamma\ge 0$ endowed with the inner product 
\begin{equation}
   (x,y)_{\mathcal H^{(\pm)}_{\gamma}} =  (|A^{(\pm)}|^\gamma x,|A^{(\pm)}|^\gamma y)
\end{equation}
and describe mapping properties of $B$ as boundedness between such spaces. We use the notation
$\mathcal B_{\gamma_1,\gamma_2} :=\mathcal B(\mathcal H^{-}_{\gamma_1},\mathcal H^{+}_{\gamma_2})$ for the Banach space of bounded operators. Note that $|A^{(\pm)}|^\gamma$ gives an isometric isomorphism between $\mathcal H^{(\pm)}_\gamma$ and $\mathcal H^{(\pm)}$.

If $B\in\mathcal B_{0,0}$, then the integral \eqref{eq:13'} exists as
Bochner integral and defines a bounded operator $X\in\mathcal B_{0,0}$.
As we will see below, in the sense of an improper Riemann (-Bochner) integral\footnote{We use the suggestive notation $\int_{\to a}^{\to b}= \lim_{a'\searrow a,\,b'\nearrow b} \int_{a'}^{b'}$.},
\begin{equation}\label{eq:20}
 B = -\int_{\to 0}^{\infty} \frac{\d}{\d t} \left(\e^{-tA^+}B\e^{tA^-} \right)\d t 
 =  A^+ X -X A^-,
\end{equation}
such that $X:\mathcal D(A^-)\to\mathcal D(A^+)$ and \eqref{eq:12'} follows directly.  If $B\in \mathcal B_{1,0}$, then the integral exists as Bochner integral and
defines a bounded operator $X\in\mathcal B_{1,0}$, i.e., an in general unbounded operator $X:\mathcal H^-\supset \mathcal D(X)\to\mathcal H^+$, but further assumptions are needed to check \eqref{eq:12'} and to ensure the boundedness of $X$. A possible way around this is given in the next statement:

\begin{lem}
Assume that $B\in\mathcal B_{\gamma,0}$ for some $\gamma \in [0,1)$. Then \eqref{eq:13'} exists
as Bochner integral and defines $X\in\mathcal B_{s,s}$ for $s\in[0,1)$. Furthermore,  $X:\mathcal D(A^-)\to\mathcal D(A^+)$ and satisfies \eqref{eq:12'}. 
\end{lem}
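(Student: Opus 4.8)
The plan is to turn the formal computation \eqref{eq:20} into a rigorous one by working on the fractional scale of spaces $\mathcal H^\pm_\sigma=\mathcal D(|A^\pm|^\sigma)$ and exploiting that, $A^+$ and $A^-$ being self-adjoint with $\spec A^+\subseteq[\delta,\infty)$ and $\spec A^-\subseteq(-\infty,-\delta]$, the spectral theorem gives the smoothing estimates
\begin{equation*}
\||A^+|^{\sigma}\e^{-tA^+}\|+\||A^-|^{\sigma}\e^{tA^-}\|\le C_{\sigma}\,t^{-\sigma}\,\e^{-\delta t/2}\quad(\sigma\ge0),\qquad \||A^\pm|^{\sigma}\e^{\mp tA^\pm}\|\le\delta^{\sigma}\e^{-\delta t}\quad(\sigma\le0),
\end{equation*}
for $t>0$, together with the fact that these are analytic semigroups (hence operator-norm continuous for $t>0$).

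For the first claim I would write the hypothesis $B\in\mathcal B_{\gamma,0}$ as $B=\tilde B\,|A^-|^{\gamma}$ with $\tilde B:=B|A^-|^{-\gamma}\in\mathcal B(\mathcal H^-,\mathcal H^+)$, conjugate the integrand by $|A^{\pm}|^{s}$, and commute the fractional powers past the semigroups by functional calculus:
\begin{equation*}
\big\||A^+|^{s}\e^{-tA^+}B\,\e^{tA^-}|A^-|^{-s}\big\|\le\||A^+|^{s}\e^{-tA^+}\|\,\|\tilde B\|\,\||A^-|^{\gamma-s}\e^{tA^-}\|\le C\,\|\tilde B\|\,t^{-\max(s,\gamma)}\,\e^{-\delta t}.
\end{equation*}
Since $\max(s,\gamma)<1$ this is integrable on $(0,\infty)$ and the integrand is norm-continuous for $t>0$, so $\int_0^\infty\e^{-tA^+}B\e^{tA^-}\,\d t$ converges as a Bochner integral in $\mathcal B_{s,s}$. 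Evaluating on the dense set $\bigcap_k\mathcal D(|A^-|^k)$ shows that the operators obtained for the various $s$ coincide, so $X$ is a single element of $\bigcap_{s\in[0,1)}\mathcal B_{s,s}$; the endpoint $s=0$ already follows from the $\e^{tA^-}$-smoothing alone.

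For the Sylvester identity I would fix $x\in\mathcal D(A^-)$ and truncate the integral to an interval $[a,b]\subset(0,\infty)$, where no smoothing is lost: for $t>0$ the range of $\e^{-tA^+}$ lies in $\mathcal D(A^+)$ with $\|A^+\e^{-tA^+}\|\le C/t$, so $t\mapsto A^+\e^{-tA^+}B\e^{tA^-}x$ is continuous on $[a,b]$ and, $A^+$ being closed, $X_{a,b}x:=\int_a^b\e^{-tA^+}B\e^{tA^-}x\,\d t\in\mathcal D(A^+)$ with $A^+X_{a,b}x=\int_a^bA^+\e^{-tA^+}B\e^{tA^-}x\,\d t$. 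Differentiating $t\mapsto\e^{-tA^+}B\e^{tA^-}x$ in $\mathcal H^+$ (product rule; $\tfrac{\d}{\d t}\e^{-tA^+}=-A^+\e^{-tA^+}$, and $\tfrac{\d}{\d t}(B\e^{tA^-}x)=B\e^{tA^-}A^-x$, valid for $t>0$ because $\e^{tA^-}$ maps $\mathcal H^-$ boundedly into $\mathcal H^-_{\gamma}=\mathcal D(B)$) and invoking the fundamental theorem of calculus gives
\begin{equation*}
A^+X_{a,b}x=\e^{-aA^+}B\e^{aA^-}x-\e^{-bA^+}B\e^{bA^-}x+X_{a,b}(A^-x).
\end{equation*}
I then let $a\searrow0$, $b\nearrow\infty$: $X_{a,b}x\to Xx$ and $X_{a,b}(A^-x)\to X(A^-x)$ by absolute convergence; $\e^{-bA^+}B\e^{bA^-}x\to0$ because $\|B\e^{bA^-}x\|\le\|\tilde B\|\,\e^{-\delta b}\||A^-|^{\gamma}x\|$; and $\e^{-aA^+}B\e^{aA^-}x\to Bx$ because $\e^{aA^-}x\to x$ in $\mathcal H^-_{\gamma}$ ($\e^{aA^-}$ commutes with $|A^-|^{\gamma}$ and is strongly continuous) while $\e^{-aA^+}\to\mathrm I$ strongly and contractively. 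Since $A^+$ is closed and $X_{a,b}x\to Xx$, this yields $Xx\in\mathcal D(A^+)$ and $A^+Xx-XA^-x=Bx$, i.e.\ \eqref{eq:12'}.

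The delicate step is the limit $a\searrow0$ of the boundary term $\e^{-aA^+}B\e^{aA^-}x$: this is exactly the point where the strict inequality $\gamma<1$ and the particular choice of the spaces $\mathcal H^\pm_\gamma$ are needed, since one must have simultaneously that $B$ is bounded on $\mathcal H^-_\gamma$ and that $\e^{aA^-}x\to x$ in the $\mathcal H^-_\gamma$-topology; for $\gamma=1$ the integral near $t=0$ is only borderline convergent and this argument breaks down, in line with the discussion preceding the lemma. Everything else --- differentiation under the integral sign and the commutation of $A^+$ with the Bochner integral over $[a,b]$ --- is routine once the analyticity of the two semigroups is invoked.
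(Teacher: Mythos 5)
Your proof is correct and follows essentially the same route as the paper: the spectral theorem for the self-adjoint generators gives the smoothing/decay estimates $\||A^\pm|^\sigma\e^{\mp tA^\pm}\|\lesssim t^{-\sigma}\e^{-\delta t/2}$, the integral is split near $t=0$ (where $\max(s,\gamma)<1$ guarantees integrability) and near $t=\infty$ (exponential decay), and the Sylvester identity is obtained by applying the fundamental theorem of calculus to $t\mapsto\e^{-tA^+}B\e^{tA^-}x$ and then invoking closedness of $A^+$. The only difference is presentational: you work on compact truncations $[a,b]$ and pass to the limit, while the paper uses the improper-integral shorthand $\int_{\to0}^{\infty}$ directly; these are the same argument.
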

\begin{proof}
By the spectral theorem for self-adjoint operators we know that
\begin{equation}
\| |A^\pm|^{\gamma} \e^{\mp tA^\pm} \|_{\mathcal B(\mathcal H^\pm)} \le \begin{cases} \delta^\gamma \e^{-\delta t},\qquad & t\ge \gamma/\delta ,\\ t^{-\gamma}  \mathrm e^{\gamma(1+\ln\gamma)},& t\le \gamma/\delta,\end{cases}
\end{equation}
for all $\gamma\ge 0$ and with $\delta=\inf\spec |A^-|=\inf\spec|A^+|$ (w.l.o.g.). Therefore, for $s,\gamma<1$
\begin{align}
   \|X\|_{\mathcal B_{s,s}} 
    &\le \int_0^\infty \| |A^+|^s \e^{-tA^+}\|_{\mathcal B(\mathcal H^+)}
    \| B \|_{\mathcal B_{\gamma,0}} \| |A^-|^{\gamma-s} \e^{ tA^-} \|_{\mathcal B(\mathcal H^-)}\d t \notag\\
  & \lesssim \| B\|_{\mathcal B_{\gamma,0}}   \left( \int_0^{1} t^{-\max(s,\gamma)} \d t + 
   \int_{1}^\infty \mathrm e^{-2\delta t} \d t \right) < \infty.
\end{align}
It remains to check \eqref{eq:12'}. Let $x\in\mathcal D(A^-)=\mathcal H^-_1$. Then
\begin{align}
   B x
   & = \lim_{t\to0} \e^{-tA^+}B \e^{tA^-} x =  -\int_{\to 0}^{\infty}   \frac{\d}{\d t} \left(\e^{-tA^+}B\e^{tA^-} x \right)\d t \notag\\
   & =  A^+ \left(\int_{\to 0}^{\infty}  \e^{-tA^+}B\e^{tA^-} \d t \right)x -  \left(\int_{\to 0}^{\infty}  \e^{-tA^+}B\e^{tA^-} \d t\right)  A^- x,
\end{align} 
where we used that for each $t>0$ the operator $\e^{\mp tA^\pm} \in \mathcal B ( \mathcal H_s^\pm,\mathcal H_\infty^\pm)$, $\mathcal H_\infty^\pm=\bigcap_{r\ge 0}\mathcal H_r^\pm$. Hence,
\begin{equation}
	A^+ \int_{\to 0}^{\infty}  \e^{-tA^+}B\e^{tA^-} \d t = B x + X A^- x \in \mathcal H^+
\end{equation}
and by closedness of $A^+$ we conclude that $Xx\in\mathcal D(A^+)=\mathcal H^+_1$ and the left hand side is equal to $A^+ X x$. Thus, \eqref{eq:12'} together with the mapping property $X:\mathcal D(A^-)\to\mathcal D(A^+)$ follows.
\end{proof}

\begin{rem}
It is straightforward to give explicit bounds on the norms $\|X\|_{\mathcal B_{0,0}}$ and $\|X\|_{\mathcal B_{\gamma,\gamma}}$. They can both be estimated by
\begin{align}
\max (  \|X\|_{\mathcal B_{0,0}}, \|X\|_{\mathcal B_{\gamma,\gamma}}\|  )
&\le \|B\|_{\mathcal B_{\gamma,0}} \left(\e^{\gamma(1+\ln\gamma)}\int_0^{\gamma/\delta} t^{-\gamma} \e^{-\delta t}\d t  + \delta^{\gamma} \int_{\gamma/\delta}^\infty \e^{-2\delta t}\d t\right)\notag\\
&\le \|B\|_{\mathcal B_{\gamma,0}}\delta^{\gamma-1} \left( \frac{\gamma\e^{\gamma} }{1-\gamma} +  \frac{\e^{-2\gamma}}2  \right).
\end{align}
Especially for $\gamma=0$ we get $\|X\|_{\mathcal B_{0,0}}\le (2\delta)^{-1} \|B\|_{\mathcal B_{0,0}}$
as for normal matrices, cf. the remarks after Corollary~\ref{cor3}. For $\gamma\to1$ or $\delta\to0$
the bound blows up. 
\end{rem}

\begin{thm}
Assume that for a certain $\gamma\in[0,1)$ we know  $A_1\in\mathcal B(\mathcal H_{\gamma},\mathcal H)$. Then there exists an invertible family $M(\rho)$ of bounded operators such that
\begin{equation}
   M^{-1}(\rho)A(\rho)M(\rho)
\end{equation}
is block-diagonal modulo $\bigcap\mathcal O(\rho^N)$ (in the operator-norm sense $M^{-1}(\rho)\mathcal H_1\to\mathcal H$). 
\end{thm}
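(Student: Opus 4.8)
The plan is to mimic the inductive construction of Theorem~\ref{thm2} verbatim, replacing the commutator equation \eqref{eq:9} by the operator Sylvester equation \eqref{eq:12'} and invoking the preceding Lemma to control the correction terms. Since $A(\rho)=A_0+\rho A_1$ is already polynomial in $\rho$, the asymptotic expansions that appeared in the matrix case are now finite, which simplifies the bookkeeping; the only genuinely new point is that the off-diagonal corrections $M_\ell$ are unbounded as operators on $\mathcal H$ but bounded as operators $\mathcal H_s\to\mathcal H_s$ for $s\in[0,1)$, by the Lemma, so one has to track the scale index carefully.

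Concretely, I would set $M_0=\mathrm I$, $\Lambda_0=A_0=\diag(A^+,A^-)$, and proceed by induction on $\ell$: given $M_0,\dots,M_{\ell-1}$ and block-diagonal $\Lambda_0,\dots,\Lambda_{\ell-1}$ with
\begin{equation}
B_\ell(\rho)=A(\rho)\Bigl(\sum_{k=0}^{\ell-1}\rho^kM_k\Bigr)-\Bigl(\sum_{k=0}^{\ell-1}\rho^kM_k\Bigr)\Bigl(\sum_{k=0}^{\ell-1}\rho^k\Lambda_k\Bigr)=\mathcal O(\rho^\ell)
\end{equation}
in the sense of $\mathcal B(\mathcal H_1,\mathcal H)$, I would extract the leading coefficient $\tilde B_\ell$, set $\Lambda_\ell=\diag(\tilde B_\ell^{++},\tilde B_\ell^{--})$ (the block-diagonal part), and solve
\begin{equation}
[A_0,M_\ell]+\tilde B_\ell-\Lambda_\ell=0
\end{equation}
by putting the diagonal blocks of $M_\ell$ equal to zero and solving the two off-diagonal Sylvester equations $A^+M_\ell^{+-}-M_\ell^{+-}A^-=-\tilde B_\ell^{+-}$ and $A^-M_\ell^{-+}-M_\ell^{-+}A^+=-\tilde B_\ell^{-+}$ via the Lemma (the second one is of the same type after swapping the roles of $A^+$ and $-A^-$, both having spectrum bounded away from the imaginary axis). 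Here one must check that the right-hand sides $\tilde B_\ell^{\pm\mp}$ indeed lie in $\mathcal B_{\gamma,0}$: this follows because $A_1\in\mathcal B(\mathcal H_\gamma,\mathcal H)$, the earlier $M_k$ map $\mathcal H_s\to\mathcal H_s$ boundedly (hence also $\mathcal H_\gamma\to\mathcal H_\gamma\hookrightarrow\mathcal H_0$ appropriately after composing with one factor of $A_1$ which absorbs the $\gamma$ loss of regularity), and the $\Lambda_k$ are bounded on $\mathcal H$. The Lemma then yields $M_\ell\in\mathcal B_{s,s}$ for all $s\in[0,1)$ and $M_\ell:\mathcal D(A_0)\to\mathcal D(A_0)$, so $B_{\ell+1}(\rho)=B_\ell(\rho)+\rho^\ell([A_0,M_\ell]-\Lambda_\ell)+\mathcal O(\rho^{\ell+1})=\mathcal O(\rho^{\ell+1})$ in $\mathcal B(\mathcal H_1,\mathcal H)$, closing the induction. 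Finally, $M(\rho)=\sum_{k=0}^{N-1}\rho^kM_k$ is invertible in $\mathcal B(\mathcal H_s)$ for $\rho$ small since $M_0=\mathrm I$ and the Neumann series converges in the operator norm on $\mathcal H_s$; one then conjugates and reads off block-diagonality modulo $\mathcal O(\rho^N)$ in the $\mathcal B(\mathcal H_1,\mathcal H)$ sense (equivalently, after Borel-summing the $M_k$ and $\Lambda_k$, modulo $\bigcap\mathcal O(\rho^N)$).

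The main obstacle, and the place where the argument differs essentially from the finite-dimensional one, is the domain/regularity bookkeeping: one has to make sure at every inductive step that the operator whose Sylvester equation is being solved genuinely belongs to the class $\mathcal B_{\gamma,0}$ demanded by the Lemma, and that the composition $M^{-1}(\rho)A(\rho)M(\rho)$ makes sense as an operator $\mathcal H_1\to\mathcal H$ (each $M_k$ and $M_k^{-1}$ must preserve $\mathcal D(A_0)$, which the Lemma provides, and the single application of $A_1$ is what consumes the allowable loss of $\gamma$ derivatives). The hypothesis $\gamma<1$ is exactly what is needed for the Bochner integral in the Lemma to converge and hence for every $M_\ell$ to be bounded on the scale $\mathcal H_s$, $s\in[0,1)$; if $\gamma=1$ the construction breaks down, which is why a separate treatment (the ``unbounded diagonaliser'' scenario) is needed. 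A secondary point to handle with a little care is invertibility of $M(\rho)$: since we only control $M_\ell$ on $\mathcal H_s$ for $s<1$ and not on $\mathcal H$ uniformly, the invertibility and the boundedness of $M^{-1}(\rho)$ should be asserted on the scale $\mathcal H_s$ (or, as in the theorem's statement, in the $\mathcal H_1\to\mathcal H$ operator-norm sense), not on $\mathcal H$ alone — but this is immediate from the Neumann series once the uniform bounds $\|M_\ell\|_{\mathcal B_{s,s}}<\infty$ are in hand.
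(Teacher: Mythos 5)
Your proposal follows essentially the same line as the paper's sketch: induct as in Theorem~\ref{thm2}, use the Lemma to solve the off-diagonal Sylvester equations, and exploit the fact that each new contribution to $B_{\ell+1}$ (namely $A_1 M_\ell - \sum_{k=1}^{\ell} M_{\ell+1-k}\Lambda_k$) contains at most one factor of $A_1$, so the loss of $\gamma$ derivatives never compounds. One small inaccuracy in your bookkeeping: the $\Lambda_k$ for $k\ge1$ belong to $\mathcal B(\mathcal H_\gamma,\mathcal H)$ rather than to $\mathcal B(\mathcal H)$, but since $M_{\ell+1-k}\in\mathcal B(\mathcal H)$ the compositions $M_{\ell+1-k}\Lambda_k$ are still in $\mathcal B(\mathcal H_\gamma,\mathcal H)$, so the induction closes exactly as you intend.
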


\begin{proof}[Sketch of proof] The proof works analogously to the one of Theorem~\ref{thm2}, replacing all matrices by the corresponding operators. We have to make sure that the corner entries  $\tilde B_\ell^{(1,2)}$ and $\tilde B_\ell^{(2,1)}$ always have the right boundedness properties in order to apply
\eqref{eq:13'}. In the first step this is exactly the assumption we made. For the following ones we proceed by induction. If $B_\ell(\rho) \in\mathcal B(\mathcal H_{\gamma},\mathcal H)$, it follows that $\Lambda_\ell\in\mathcal B(\mathcal H_{\gamma},\mathcal H)$  and that the constructed diagonaliser $M_\ell$ is bounded $M_\ell\in\mathcal B(\mathcal H_s,\mathcal H_s)$, $s\in[0,1)$ and maps $\mathcal D(A)$ into itself. Looking at $B_{\ell+1}(\rho)$ we see that the non-vanishing additional terms are of the form
\begin{equation}
  B_{\ell+1}(\rho) - B_{\ell}(\rho) = \rho^{\ell+1}  \big( A_1 M_\ell - \sum_{k=1}^\ell M_{\ell+1-k} \Lambda_k\big)    + \mathcal O(\rho^{\ell+2}) \in\mathcal B(\mathcal H_{\gamma},\mathcal H)
\end{equation} 
and the desired mapping properties follow. 
\end{proof}

\begin{rem}
We know by construction that $M(\rho) : \mathcal H_1 \to \mathcal H_1$, however, we don't know whether it is bounded as operator between these spaces.
\end{rem}

\subsection{Selfadjoint perturbations and unitary diagonaliser} If we consider only self-adjoint perturbations it might be of interest to construct a unitary block-diagonaliser $M(\rho)$. 
In order to apply a perturbation series method we consider the Cayley transform $K(\rho)$ of $M(\rho)$,
\begin{equation}
M(\rho) = \frac{\mathrm I-\i K(\rho)}{\mathrm I+\i K(\rho)}, \qquad K(\rho) = \rho K_1 + \rho^2 K_2 + \cdots .
\end{equation}
If $M(\rho)\to\mathrm I$ in the norm sense as $\rho\to0$, then $K(\rho)$ will be a family of bounded operators (at least for small $\rho$), which can be achieved under the same assumptions on the perturbation as in the previous section.  

In order to avoid unnecessary repetitions we will only sketch the main difference to the previous considerations. Instead of a formulation in the spirit of Theorem~\ref{thm2} we multiply both sides with
$(\mathrm I+\i K(\rho))$ and $(\mathrm I-\i K(\rho))$, respectively, and require that
\begin{equation}
  \left(I+\i\sum_{k=1}^{\ell-1} \rho^k K_k \right) A(\rho)  \left(I-\i\sum_{k=1}^{\ell-1} \rho^k K_k \right)
  -   \left(I-\i\sum_{k=1}^{\ell-1} \rho^k K_k \right)   \left(\sum_{k=0}^{\ell-1} \rho^k \Lambda_k \right) \left(I+\i\sum_{k=1}^{\ell-1} \rho^k K_k \right) 
\end{equation}
is of order $\mathcal O(\rho^{\ell})$ for suitable self-adjoint and bounded operators $K_k$ (having the same mapping properties as the $M_k$ in the previous section) and symmetric  operators $\Lambda_j$.
 
Denoting the $\rho^\ell$-part of this expression as $\tilde B_\ell$, we conclude that the next terms are 
$\Lambda_\ell=\diag\tilde B_\ell$ and $K_\ell$ subject to
\begin{equation}
   2\i [K_\ell, A_0] = \tilde B_\ell - \Lambda_\ell.
\end{equation}
This is again an equation of form \eqref{eq:12'} for the corner entries of the block-matrix $K_\ell$ and the same procedure can be applied to its solution. Since $A^\pm$ are self-adjoint, it follows that the upper right corner entry is the adjoint of the lower left and self-adjointness of $K_\ell$ follows.  

\section{Concluding remarks}
Diagonalisation schemes of the form of Theorem~\ref{thm2} separating single eigenvalues and their generalisations have been useful for quite a few problems in the theory of hyperbolic and hyperbolic-parabolic coupled systems, as already pointed out and discussed in \cite[Section 3]{Jachmann:2008a}. Utilisations of these ideas can be found in \cite{Taylor:1975}, \cite{Yagdjian:1997}, \cite{Reissig:2000}, \cite{Reissig:2005a}, \cite{Wirth:2007b}, \cite{Wirth:2007c} to name just a few references. They are stable enough to adapt them to diagonalisation schemes within symbol classes and to study the evolution of linear systems with variable coefficients.

The author thinks that the generalisations to blocks demonstrated in this note will be of use for applications to more degenerate situations, especially due to the stability under perturbations following from the considerations in Section~\ref{sec:1.3}.  

The statements extent known facts from perturbation theory of matrices, cf. \cite{Kato:1980}, where analytic dependence of matrices upon parameters were treated and representations in terms of Dunford were integrals given.

The considerations in Section~\ref{sec:2} allow weaker assumptions on the perturbation $A_1$ compared to \cite{Langer:2001} or \cite{Kraus:2004}, where diagonally dominated operators require essentially $\gamma=1/2$. However, our approach gives only an asymptotic decoupling/diagonalisation of block operator matrices compared to exact formulas for block-diagonaliser in terms of a factorisation of the Schur complement associated to the block matrix in  \cite{Langer:2001}.

\bibliographystyle{alpha}

\end{document}